\journal{Advances in Mathematics}
\newtheorem{theorem}{Theorem}
\newtheorem{proposition}{Proposition}
\newdefinition{remark}{Remark}
\newproof{proof}{Proof}
\def\a{\a}
\def\ad{{\rm ad}}
\def\Ad{{\rm Ad}}
\def\al{\alpha}
\def\CC{\mathbb{C}}
\def\com{\ts,\hskip-.5pt}
\def\De{\Delta}
\def\f{\varphi}
\def\ff{\mathfrak{f}}
\def\g{\mathfrak{g}}
\def\Gr{\operatorname{G}}
\def\ga{\gamma}
\def\gp{\g^{\ts\prime}}
\def\gr{\operatorname{gr}}
\def\la{\lambda}
\def\lcd{,\ldots,}
\def\Mb{\,\overline{\!M}}
\def\n{{\mathfrak{n}_+}}
\def\nminus{{\mathfrak{n}_-}}
\def\Nr{\operatorname{Norm}}
\def\op{\oplus}
\def\ot{\otimes}
\def\S{\operatorname{S}}
\def\sih{\tilde{s}}
\def\sl{\mathfrak{sl}}
\def\t{\mathfrak{t}}
\def\Tr{\operatorname{T}}
\def\ts{\hskip1pt}
\def\U{\operatorname{U}}
\def\Y{\operatorname{Y}}
\def\Zb{\,\overline{\!Z}}
\def\ZZ{{\mathbb Z}}
\begin{document}


\begin{frontmatter}

\title{A generalized Harish-Chandra isomorphism}
 
\author[a,b]{Sergey Khoroshkin}
\address[a]{Institute for Theoretical and Experimental Physics, 
Moscow 117259, Russia}
\address[b]{Department of Mathematics, Higher School of Economics,
Moscow 117312, Russia}

\author[c]{Maxim Nazarov}
\address[c]{Department of Mathematics,  University of York, 
York YO10 5DD, England}

\author[d]{Ernest Vinberg}
\address[d]{Department of Mathematics, Moscow State University, 
Moscow 119992, Russia}

\begin{keyword}
Chevalley theorem\sep
Harish-Chandra isomorphism\sep
Zhelobenko operator
\end{keyword}

\begin{abstract}
For any complex reductive Lie algebra $\g$ and any
locally finite $\g\ts$-module $V$, we extend to the tensor product 
$\U(\g)\ot V$ the Harish-Chandra description of $\g\ts$-invariants
in the universal enveloping algebra $\U(\g)\ts$.
\end{abstract}

\end{frontmatter}

\thispagestyle{empty}


\section*{Introduction}

Let $\g$ be a complex semisimple Lie algebra with a given decomposition 
$\g=\nminus\op\t\op\n$ to a Cartan subalgebra $\t$ and the nilpotent
radicals $\n$ and $\nminus $ of two opposite Borel subalgebras
containing $\t\ts$. 
Take the projection of the universal enveloping algebra $\U(\g)$ to $\U(\t)$ 
parallel to $\nminus\!\U(\g)+\U(\g)\,\n\ts$.  
Harish-Chandra proved \cite{HC} that the restriction of the projection
map to the center $\mathrm{Z}(\g)$ of $\U(\g)$ establishes an
isomorphism of $\mathrm{Z}(\g)$ with the subalgebra of
invariants in $\U(\t)$
relative to the shifted action of the Weyl group $W$ of $\g\ts$. 
In the graded commutative setting, the Harish-Chandra theorem
reduces to the Chevalley theorem, which states 
that restriction from $\g$ to $\t$ yields an isomorphism
between the algebra of $\g\ts$-invariant polynomial functions on $\g$
and the algebra of $W$-invariant polynomial functions on~$\t\ts$.

Now let $V$ be any $\g\ts$-module which is a direct sum of
its \text{irreducible} finite-dimensional submodules. 
Take the tensor product $M=\U(\g)\ot V$ of 
$\g\ts$-modules,
and consider its subspace of invariants $M^{\ts\g}\ts$.
We provide an explicit description of 
this subspace, thus
generalizing the Harish-Chandra theorem.
Namely, the vector space $M$ has a natural $\U(\g)\ts$-bimodule
structure, see the definitions (\ref{lact}),(\ref{ract}).
The diagonal action of $\g$ on $M$ coincides with the action 
adjoint to the $\U(\g)\ts$-bimodule structure. Consider
the quotient vector space
$
Z=M/(\nminus  M+M\n)
$
and the 
projection map $M\to Z\ts$.
This map turns out to be injective on the subspace
$M^{\ts\g}\subset M\ts$, and we describe the image 
of this subspace in $Z\ts$.

Let $r$ be the rank of $\g\ts$, and
let $\De\subset\t^*$ be the root system of $\g\ts$. 
Let $\al_1\lcd\al_r\in\De$ be simple roots, and
let $s_1\lcd s_r\in W$ be the corresponding simple reflections.
For each $i=1\lcd r$ let $H_i=\al_i^\vee\in\t$ be the coroot 
corresponding to $\al_i\ts$. Let $E_i\in\n$ and
$F_i\in\nminus$  be root vectors corresponding to the roots
$\al_i$ and $-\al_i\ts$. Then let $\g_i$ be the $\sl_2\ts$-subalgebra
spanned by $E_i,F_i,H_i\ts$. For each $j=0,1,2,\ldots$
let $V_{ij}$ be the sum of $(2j+1)$-dimensional irreducible
$\g_i\ts$-submodules of $V$.
For any given $i$ the zero weight subspace 
$V^{\ts0}\subset V$ relative to $\t$
is a direct sum over $j=0,1,2,\ldots$
of the intersections $V^{\ts0}\cap V_{ij}\ts$.

The quotient 
vector space $Z$ can be identified with the
tensor product $\U(\t)\ot V$.
We prove that an element of $Z=\U(\t)\ot V$
lies in the image of $M^{\ts\g}$
if and only if for each $i=1\lcd r$ it can be written as
a finite sum of products of the form
$\Theta\,\Psi_{ij}\ot v$ where $v\in V^{\ts0}\cap V_{ij}$,
$\Theta\in\U(\t)$ is invariant under the shifted action of 
$s_i\in W$, and
\begin{equation*}
\label{psij}
\Psi_{ij}=(H_i+2)(H_i+3)\ldots(H_i+j+1)\ts.
\end{equation*}

This description of the image of $M^{\ts\g}$ in 
$Z=\U(\t)\ot V$
has a graded commutative version, 
which generalizes the Chevalley restriction theorem.
Namely, let $\S(\g)$ be the symmetric algebra of $\g\ts$.
Take the tensor product $\S(\g)\ot V$ of $\g\ts$-modules.
The projection map 
$$
\S(\g)\to\S(\g)/(\nminus+\n)\S(\g)=\S(\t)
$$
yields a map $\S(\g)\ot V\to\S(\t)\ot V$,
which is injective on the subspace of $\g\ts$-invariants.
We prove that the image of this subspace in $\S(\t)\ot V$
consists of all $W$-invariant elements
$F\in\S(\t)\ot V^{\ts0}$ satisfying 
the following property: 
for every $i=1\lcd r$ the projection of $F$ to each
$\S(\t)\otimes (V^0\cap V_{ij})$ with $j=0,1,2,\ldots$  
is divisible by $H_i^j$ in
the first tensor factor.
Here we use the standard actions of the Weyl group 
$W$ on $\S(\t)$ and on $V^{\ts0}$.
A criterion on $V$, for all
$W$-invariant elements of $\S(\t)\ot V^{\ts0}$
to be images of $\g\ts$-invariant elements of $\S(\g)\ot V$,
was given by Broer \cite{B}.
The sufficiency of Broer's condition also follows from our 
result,~see~Remark~\ref{R3}.

We also give another description of the image
of $M^{\ts\g}$ in $Z\ts$, which
does not need splitting $V$ into the sum
of its $\g_i\ts$-isotypical \text{components.}
Let $\overline{\U(\t)\!}\,$ be the ring of fractions of the 
commutative algebra $\U(\t)$ relative to the set of denominators
\eqref{M2}. Consider the left $\overline{\U(\t)\!}\,$-module
$$
\Zb={\overline{\U(\t)\!}\,}\ot_{\,\U(\t)}Z
$$
where we use the left action of $\U(\t)$ on $Z\ts$.
If the quotient vector space $Z$ is identified with
$\U(\t)\ot V$, then $\Zb$ is identified with 
${\overline{\U(\t)\!}\,}\ot V$.
The right action of $\U(\t)$ on $Z$ also extends to
a right action of ${\overline{\U(\t)\!}\,}$~on~$\Zb\ts$.

We use certain linear operators $\xi_1,\xi_2,...,\xi_r$
on $\Zb\ts$, which we call the Zhelobenko operators.
They originate from the \textit{extremal cocycle\/}
on the Weyl group $W$, defined in \cite{Z1}.
Using this cocycle, for any pair
$(\mathfrak{f}\com\g)$
with a finite-dimensional Lie algebra $\mathfrak{f}$
containing $\g\ts$, Zhelobenko
constructed a resolution of the subspace $N^{\ts\n}$ of 
$\n\ts$-invariants of any $\mathfrak{f}\ts$-module $N$.
Here $N$ is regarded as a $\g\ts$-module~by~restriction.
In their present form, the operators $\xi_1,\xi_2,...,\xi_r$
have been defined by Khoroshkin and Ogievetsky \cite{KO}
as automorphisms of Mickelsson algebras \cite{Mi}.
They satisfy the braid relations corresponding to 
$\g\ts$. This braid group action is closely 
related to the \textit{dynamical Weyl group action}
due to Etingof, Tarasov and Varchenko \cite{EV,TV}.

The Zhelobenko operators $\xi_1\lcd\xi_r$ on $\Zb$
preserve the zero weight subspace 
$\Zb{}^{\ts0}$ 
relative to the adjoint action of $\t$ on $\Zb\ts$,
and moreover are involutive on this subspace. 
So we get an action of the Weyl group $W$ on $\Zb{}^{\ts0}$,
such that each 
$s_i\in W$ acts as the operator $\xi_i\ts$.
We prove that 
the image of $M^{\ts\g}$ in $Z$ consists
of all elements of $Z\cap\Zb{}^{\ts0}$
which are invariant under the latter action of $W$.

This result is remarkably similar in spirit to those of Kostant 
and Tirao \cite{KT}, who studied the subalgebra 
$\U(\ff)^{\ts\g}\subset\U(\ff)$ where $\ff$ and $\g$ 
are the complexified Lie algebras
of a real connected semisimple Lie group and of its maximal compact
subgroup respectively. In~\cite{KT} the subalgebra of $\g\ts$-invariants
$\U(\ff)^{\ts\g}$ 
was mapped injectively to the tensor product $\U(\g)\ot\U(\mathfrak{a})$ 
where $\mathfrak{a}$ is the Cartan subalgebra of the symmetric pair
$(\ff\com\g)\ts$. To describe the image of $\U(\ff)^{\ts\g}$ under this map, 
a version of the intertwining operators of Knapp and Stein
\cite{KS} was used in \cite{KT}, instead of the shifted action of 
the Weyl group of 
$(\ff\com\g)$. A certain localization of the ring $\U(\g)\ot\U(\mathfrak{a})$
was also used in \cite{KT}. This result
has been generalized by Oda \cite{O}.

We will work in a setting slightly more general
than used in the beginning of this section. 
Our $\g$ will be any reductive complex Lie algebra. 
Then we will fix a connected reductive complex 
algebraic group $\Gr$ with the Lie algebra $\g\ts$.
Our $V$ will be any $\Gr\ts$-module which 
can be decomposed
into a direct sum of irreducible finite-dimensional $\Gr\ts$-submodules.
We regard $M=\U(\g)\ot V$ as a  
$\Gr\ts$-module, and study its subspace 
of $\Gr\ts$-invariants. Our Theorem~1 and 
Proposition~4 describe this subspace explicitly.
Theorem~2 is a graded commutative version of this description.
We give a geometric proof of Theorem~2. 
Then we prove Theorem~1, and extend it to 
disconnected algebraic~groups.

Our work is motivated by the publication \cite{KN1},
which was inspired
by the results of Tarasov and Varchenko \cite{TV}.
The publication \cite{KN1}
established a correspondence between Zhelobenko automorphisms
of certain Mickelsson algebras, and canonical
intertwining operators 
of tensor products of the fundamental 
of representations of Yangians. 
Namely, these are
the Yangian $\Y(\mathfrak{gl}_n)$ of the general linear Lie algebra,
and its twisted analogues $\Y(\mathfrak{sp}_n)$ and $\Y(\mathfrak{so}_n)$
which correspond to the symplectic and orthogonal Lie algebras.
For an introduction to the theory of 
Yangians see the recent book by Molev \cite{Mo}.

In a forthcoming publication \cite{KN5} of the first two authors
of the present article, Theorem~1 is used to
solve a basic problem in the representation theory of Yangians.
Up to the action of the centre of $\Y(\mathfrak{sp}_n)\ts$, 
every irreducible finite-dimensional
$\Y(\mathfrak{sp}_n)\ts$-module is realized
as the image of an intertwining operator of tensor products
of fundamental representations of $\Y(\mathfrak{sp}_n)\ts$.
For the Yangian $\Y(\mathfrak{gl}_{\ts n})$
such realizations were provided by
Akasaka and Kashiwara \cite{AK}, by Cherednik \cite{C},
and by Nazarov and Tarasov \cite{NT}.
Our Theorem~1 yields new proofs of these results for 
$\Y(\mathfrak{gl}_{\ts n})\ts$.
For the twisted Yangian $\Y(\mathfrak{so}_n)\ts$, the images
of our intertwining operators realize,
up to the action of the centre of the algebra 
$\Y(\mathfrak{so}_n)\ts$,
all those irreducible finite-dimensional representations,
where the action of the subalgebra 
$\U(\mathfrak{so}_n)$ of $\Y(\mathfrak{so}_n)$
integrates to an action of the complex 
special~orthogonal~group~$\mathrm{SO}_n\ts$.


\section{Notation}

Let $\g$ be any reductive complex Lie algebra of semisimple rank $r$.
Choose a \textit{triangular decomposition\/}
$$
\g=\nminus\op\t\op\n
$$
where $\t$ is a Cartan subalgebra, while $\n$ and $\nminus$ are 
the nilpotent radicals of two opposite Borel subalgebras of $\g$ 
containing $\t\ts$. Let $\De\subset\t^*$ be the root system of $\g\ts$. 
Let $\al_1\lcd\al_r\in\De$ be simple roots.

For each $i=1\lcd r$ let $H_i=\al_i^\vee\in\t$ be the coroot 
corresponding to the simple root $\al_i\ts$. Let $E_i\in\n$ and
$F_i\in\nminus$ be root vectors corresponding to the roots
$\al_i$ and $-\al_i\ts$. We assume that $[E_i\com F_i]=H_i\ts$.

Let $W$ be the Weyl group of the root system $\De\ts$. Let
$s_1\lcd s_r\in W$ be the reflections corresponding to the 
simple roots $\al_1\lcd\al_r\ts$. 
Let $\rho$ be the half-sum 
of the positive roots. Then the \textit{shifted action\/} 
$\circ$ of the group $W$ on the vector space $\t^*$ is defined by setting
\begin{equation*}
w\circ\la=w(\la+\rho)-\rho.
\end{equation*}
In particular,
$$
s_i\circ\la=s_i(\la+\al_i).
$$
The action $\circ$ extends to an action of $W$ by automorphisms 
of the symmetric algebra $\S(\t)=\U(\t)\ts$, 
by regarding elements of this algebra as polynomial functions on $\t^*$:
$$
(w\circ\Phi)(\la)=\Phi(w^{-1}\circ\lambda)
\quad\text{for}\quad
\Phi\in \U(\t)\ts.\
$$
Note that $s_i\circ H_i=-H_i-2\ts$. 
It follows that for the elements
$\Psi_{ij}$ defined in the Introduction 
\begin{equation}
\label{newformula}
s_i\circ\Psi_{ij}=(-1)^j\,H_i(H_i-1)...(H_i-j+1)\ts.
\end{equation}

For any $\g\ts$-module $M$ 
we denote by $M^{\ts0}$ its zero weight 
subspace relative to $\t\ts$.

Let $\Gr$ be a connected reductive complex algebraic group with 
Lie algebra $\g$. Let $V$ be a locally finite $\Gr\ts$-module. 
This means that $V$ can be decomposed into a direct sum of 
irreducible finite-dimensional $\Gr\ts$-submodules.
Put $M=\U(\g)\ot V\ts$.
Since the adjoint action of the group $\Gr$ on $\U(\g)$ is 
locally finite, 
the same holds for the diagonal action of $\Gr$ on $M$. 
We will also consider the action of the Lie algebra $\g$ on 
$M$ corresponding to that of 
$\Gr\ts$.

We will also regard $M$ as a $\U(\g)\ts$-bimodule by using 
the left and right actions defined for $X\in\g$ by the formulas
\begin{align}
\label{lact}
X\ts(u\ot v)&=(Xu)\ot v,
\\
\label{ract}
(u\ot v)\ts X&=(u\ts X)\ot v-u\ot(Xv)
\end{align}
where $u\in\U(\g)\ts$, $v\in V\ts$.
Then the initial (diagonal) action of $\g$ on $M$ coincides with the action 
$\ad$ \textit{adjoint} to the $\U(\g)\ts$-bimodule structure:
$$
(\ad\ts X)\ts m=X\ts m-m\ts X 
\quad\text{for}\quad 
X\in\g
\quad\text{and}\quad 
m\in M\ts.
$$

Note that $M$ is a free left and a free right $\U(\g)\ts$-module. 
Further, $M$ admits the following decomposition into a direct sum of 
$\U(\t)\ts$-bimodules:
\begin{equation}
\label{MT}
M=\U(\t)\ot V\op(\nminus M+M\n)\ts,
\end{equation}
\noindent
see \cite[Proposition 3.3]{KO}. Set
$
Z=M/(\nminus  M+M\n)\ts.
$
By \eqref{MT}, the restriction of the projection $M\to Z$
to the subspace $\U(\t)\ot V\subset M$ provides a bijection
$$
\U(\t)\ot V\to Z\ts.
$$
Moreover, $Z$ is a free left and a free right $\U(\t)\ts$-module.
Let $Z^{\ts0}\subset Z$ be the zero weight subspace 
relative to the adjoint action of $\t\ts$.

For each root $\al\in\De$ let $H_\al=\al^\vee\in\t$ be the
corresponding coroot. 
Denote by $\overline{\U(\t)\!}\,$ the ring of fractions of the 
commutative algebra $\U(\t)$ relative to the set of denominators
\begin{equation}
\label{M2}
\{\,H_\al+k\ |\ \al\in\Delta,\ k\in\ZZ\,\}\,.
\end{equation}
If the elements of $\U(\t)$ are regarded as polynomial functions on $\t^*$, 
then 
the elements of $\overline{\U(\t)\!}\,$ 
can be regarded as rational functions on $\t^*$. 
The shifted action $\circ$ of the Weyl group $W$ on $\U(\t)$
extends to~$\overline{\U(\t)\!}\,$.

Let $\overline{\U(\g)\!}\,$ be
the ring of fractions of $\U(\g)$ relative to
the set of denominators \eqref{M2}. Put
$$
\Mb=\overline{\U(\g)\!}\,\ot V.
$$
The left action of $\U(\g)$ on $M$ extends to an action of 
$\overline{\U(\g)\!}\,$ on $\Mb$ in a natural way, 
via left multiplication in the first tensor factor of $\Mb\ts$. 

Since $M$ is a locally finite $\g\ts$-module, it is spanned by its 
weight vectors relative to $\t\ts$. 
For any weight vector $m\in M$ and any $\al\in\De$ there is $l\in\ZZ$ 
such that
$m\ts H_\alpha=(H_\alpha+l)\ts m$. Then by setting
$$
m\ts (H_\alpha+k)^{-1}=(H_\alpha+k+l\ts)^{-1} m
$$
for each $k\in\ZZ$, we extend the right action of $\U(\g)$ on $M$
to a right action of $\overline{\U(\g)\!}\,$ on $\Mb$. 
Thus the space $\Mb$ becomes an 
$\overline{\U(\g)\!}\,$-bimodule. It is obviously free 
as a left and as a right $\overline{\U(\g)\!}\,$-module.

The decomposition \eqref{MT} extends to the following decomposition 
of $\Mb$ into a direct sum of $\overline{\U(\t)\!}\,$-bimodules:
\begin{equation}
\label{MbarT}
\Mb=\overline{\U(\t)\!}\,\ot V
\op
(\nminus\Mb+\Mb\ts\n)\ts.
\end{equation}
Set
$$
\Zb=\Mb/
(\nminus\Mb+\Mb\ts\n)\ts.
$$
Note that 
$$
(\nminus\Mb+\Mb\n)\cap M=\nminus M+M\n\ts.
$$
Thus we have a natural embedding $Z\to\Zb\ts$.
Due to \eqref{MbarT}, the 
restriction of the canonical map $\Mb\to\Zb$
to the subspace $\,\overline{\U(\t)\!}\,\ot V\subset\Mb$
provides a bijection 
$$
\,\overline{\U(\t)\!}\,\ot V\to\Zb\ts.
$$
Using this bijection, the above embedding
$Z\to\Zb$ corresponds
to the natural embedding $\U(\t)\to\,\overline{\U(\t)\!}\,$.
Moreover, $\Zb$ is a free left and a free right 
$\,\overline{\U(\t)\!}\,$-module.
Let $\Zb{}^{\ts0}\subset\Zb$ be the zero weight subspace 
relative to the adjoint action of $\t\ts$.

Let $\Tr$ be the maximal torus of the group $\Gr$ 
with the Lie algebra $\t\ts$. Let
$\Nr(\Tr)$ be the normalizer of $\Tr$ in $\Gr\ts$. 
The adjoint action of the group $\Nr(\Tr)$ on $\t$ 
establishes an isomorphism
$$
\Nr(\Tr)/\Tr\to W.
$$
Since this action preserves the set of coroots, 
the induced action of $\Nr(\Tr)$ on $\U(\t)$ extends to its action on 
$\overline{\U(\t)\!}\,$, so that for any $w\in W$
$$
(w\,\Phi)(\la)=\Phi\ts(w^{-1}(\la)),
$$
when the element $\Phi\in\overline{\U(\t)\!}\,$ 
is regarded as a rational function on $\t^*$. 
The adjoint action of $\Nr(\Tr)$ on $\U(\g)$ then extends 
to its action by automorphisms of $\overline{\U(\g)\!}\,$, 
and the adjoint action of $\Nr(\Tr)$ on $M$ extends to a 
$\overline{\U(\g)\!}\,$-bimodule equivariant action of $\Nr(\Tr)$ on 
$\Mb$.

For each $i=1\lcd r$ let $\g_i$ denote the $\sl_2\ts$-subalgebra of 
$\g$ spanned by the elements $E_i,F_i,H_i\ts$. 
Let $\Gr_i$ be the corresponding connected subgroup of $\Gr\ts$. 
Choose a representative of $s_i$ in $\Nr(\Tr)$ lying 
in $\Gr_i$ and denote it by $\sih_i\ts$. 
The elements $\sih_1\lcd\sih_r\in\Nr(\Tr)$
satisfy the braid relations
$$
\underbrace{\sih_i\,\sih_j\,\sih_i\,\ldots}_{m_{ij}}\,=\,
\underbrace{\sih_j\,\sih_i\,\sih_j\,\ldots}_{m_{ij}}\, 
\quad\text{for}\quad i\neq j\ts,
$$
where $m_{ij}$ is the order of the element
$s_i\ts s_j$ in the group $W$, see \cite{T}.


\section{Zhelobenko operators}

For $i=1\lcd r$ define a 
linear map $\eta_{i}:M\to\Mb$ 
by setting $\eta_i\ts(m)$ for any $m\in M$ to be 
\begin{equation}
\label{M10}
\sum_{k=0}^\infty\,\,
\bigl(k!\,H_i(H_i-1)\ldots(H_i-k+1)\bigr)^{-1}
E_i^k\,(\ad\ts F_i)^k\,\sih_i(m)\ts.
\end{equation}
Since the adjoint action of $\g$ on $M$ is locally finite, 
for any given $m\in M$ only finitely many terms of the sum 
\eqref{M10} may differ from zero. Hence the map $\eta_i$ is well defined. 
The definition \eqref{M10} and the next two propositions go back to
\cite[Section~2]{Z1}. 
See \cite[Section 3]{KN1} for detailed proofs of these two propositions.

\begin{proposition} 
We have $\eta_i\,(\nminus M+M\n)
\subset
(\nminus\Mb+\Mb\n)$.
\end{proposition}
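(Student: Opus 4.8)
The plan is to show that each of the two generators of the subspace $\nminus M + M\n$ — namely elements of the form $F_\al\,m$ with $\al$ a simple root, wait, more carefully elements $X m$ with $X\in\nminus$ and elements $m X$ with $X \in \n$ — is sent by $\eta_i$ into $\nminus\Mb + \Mb\n$. Since $\eta_i$ is linear and $\nminus M$ (resp.\ $M\n$) is spanned by such products, it suffices to treat a single factor. The basic mechanism is that $\eta_i$ is built from three operations: the operator $\sih_i$, which conjugates $\nminus$ and $\n$ by the Weyl group element $s_i$; the operator $\ad F_i$, which is a derivation of the adjoint action; and left multiplication by powers of $E_i$ together with a denominator in $\U(\t)$. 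I would track how a left factor from $\nminus$ and a right factor from $\n$ propagate through each of these three operations in turn.

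First I would record the effect of $\sih_i$. Because $\sih_i\in\Nr(\Tr)$ acts on $\Mb$ as a bimodule-equivariant automorphism intertwining the adjoint action with the $s_i$-twist, we have $\sih_i(Xm) = \Ad(\sih_i)(X)\cdot\sih_i(m)$ and $\sih_i(mX) = \sih_i(m)\cdot\Ad(\sih_i)(X)$. Now $s_i$ permutes the positive roots other than $\al_i$ among themselves and sends $\al_i\mapsto-\al_i$. Hence $\Ad(\sih_i)$ maps $\nminus$ into $\nminus + \g_i$ and maps $\n$ into $\n + \g_i$; the only ``leakage'' is into the rank-one subalgebra $\g_i$ spanned by $E_i,F_i,H_i$. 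So after applying $\sih_i$ it is enough to handle left factors $E_i, F_i, H_i$ and right factors $E_i, F_i, H_i$, in addition to the harmless factors that stay in $\nminus$ or $\n$.

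Next I would push such a $\g_i$-factor through the operator $T_i := \sum_k (k!\,H_i(H_i-1)\cdots(H_i-k+1))^{-1} E_i^k (\ad F_i)^k$. For a right factor this is the easy case: $(\ad F_i)^k(m X) = \sum \binom{k}{l}((\ad F_i)^l m)\,((\ad F_i)^{k-l}X)$, and $(\ad F_i)$ applied to $E_i, F_i, H_i$ lands in $\g_i$ again and in fact vanishes after one or two steps; left multiplication by $E_i^k$ does not disturb a right factor, and one checks directly that a right factor $E_i$ after the whole procedure becomes a combination with right factors in $\n$ plus correction terms that can be absorbed — this is essentially the $\sl_2$ computation that makes the Zhelobenko operator land in $\n$-invariants, and it is exactly the content one expects from \cite{Z1}. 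For a left factor $E_i$ one commutes it to the front past the $E_i^k$ (it commutes) and past the denominator (picking up a weight shift in $\U(\t)$ that is invertible in $\overline{\U(\t)\!}\,$), leaving $E_i$ on the left — but $E_i\in\n$, not $\nminus$, so one must then use the other half of the target subspace, or rather recombine; for a left factor $F_i$ or $H_i$ the commutation past $E_i^k$ and the derivation is the genuine $\sl_2$ identity.

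The main obstacle, as usual with Zhelobenko-type statements, is the left factor $E_i$ (equivalently $F_i$ after the $\sih_i$-twist): one has to verify that although it does not sit in $\nminus$, its image reorganizes — after commuting through $E_i^k$ and the rational coefficients — into something lying in $\nminus\Mb + \Mb\n$, using the $\sl_2$-theory inside $\g_i$ and the precise form of the denominator $k!\,H_i(H_i-1)\cdots(H_i-k+1)$, which is designed to make these cancellations work. I would carry this out by reducing, via the PBW decomposition of $\U(\g_i)$ and the $\g_i$-module structure of $V$, to a computation entirely inside $\,\overline{\U(\g_i)\!}\,\ot V_i$ for a single irreducible $\g_i$-summand $V_i$, where it becomes a finite explicit identity. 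Since \cite{KN1} is cited for detailed proofs, I would keep the write-up to the structural reduction above and invoke that reference for the rank-one verification.
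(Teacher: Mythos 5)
The paper offers no proof of its own for this proposition: it simply refers to \cite[Section 3]{KN1} (and ultimately to \cite{Z1}). Your structural reduction therefore follows the same route as the cited proof — conjugate by $\sih_i$, observe that the only factors which leave $\nminus$ (resp.\ $\n$) are $F_i$ on the left (resp.\ $E_i$ on the right), and settle those by a rank-one identity in $\g_i$ — and in that sense it is more explicit than what the paper itself writes. But two points in your sketch are inaccurate, and they sit exactly at the hard step. First, the right-factor case is \emph{not} "the easy case" when the factor is $E_i$ itself: after the twist it becomes a right factor proportional to $F_i$, and since $\ad\ts F_i$ annihilates a right factor $F_i$ while $E_i^k$ and the denominators act by left multiplication, one gets literally $\eta_i(m\ts E_i)=c\,T_i(\sih_i(m))\,F_i\in\Mb\ts F_i$, which is not contained in $\Mb\ts\n$. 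Writing $x\ts F_i=F_i\ts x-(\ad\ts F_i)(x)$, what remains to prove is that $(\ad\ts F_i)\ts T_i(\sih_i(m))$ lies in $\nminus\Mb+\Mb\ts\n$, and this is precisely the telescoping identity that uses the specific denominators $k!\,H_i(H_i-1)\cdots(H_i-k+1)$ — it is exactly as hard as the left-factor case, not a "correction term that can be absorbed", and your sketch does not identify it as such.

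Second, even the factors that stay in $\nminus$ after the twist are not automatic: for a left factor $F_\gamma$ with $\gamma$ a positive root distinct from $\al_i$, after applying $(\ad\ts F_i)^k$ one must still commute $E_i^k$ from the left past negative root vectors. This works, but only because of the root-string fact that $(\ad\ts E_i)^{\ts p}F_\gamma$ remains in $\nminus$ for every $p$ whenever $\gamma\in\De^+$ and $\gamma\neq\al_i$ (the $\al_i$-string through $-\gamma$ consists of negative roots); your sketch passes over this. Finally, the decisive rank-one identities for the left factor $E_i$ and the right factor $F_i$ (after the twist) are announced but not verified; deferring them to \cite{KN1} is consistent with what the paper does, but as a self-contained argument the proposal is incomplete at exactly the step where the precise form of \eqref{M10} matters.
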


Due to this proposition, the map $\eta_i$ induces a linear map
$
\xi_i:Z\to\Zb\ts.
$

\begin{proposition} 
For any $\Phi\in\U(\t)$ and $z\in Z$ we have
$$
\xi_i\ts(\Phi\ts z)=(s_i\circ\Phi)\,\xi_i(z)\ts.
$$
\end{proposition}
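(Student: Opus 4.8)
The plan is to reduce the identity $\xi_i(\Phi z)=(s_i\circ\Phi)\,\xi_i(z)$ to a corresponding identity for the map $\eta_i$ on $M$, by picking a representative $m\in M$ for $z\in Z$ and a representative $u\in\U(\t)\subset\U(\g)$ for $\Phi$, and then tracking how $\eta_i$ interacts with \emph{left} multiplication by $u$. Since $\eta_i$ is built out of $\sih_i$, the operators $E_i$, and $\ad F_i$, the first step is to commute $u$ past $\sih_i$: for $u\in\U(\t)$ the adjoint action of $\sih_i\in\Nr(\Tr)$ sends $u$ to $s_i(u)$ (the \emph{unshifted} Weyl reflection), so $\sih_i(u\ts m)=s_i(u)\,\sih_i(m)$ in $\Mb$. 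The remaining task is to move $s_i(u)$ through the operator $\sum_k (k!\,H_i(H_i-1)\cdots(H_i-k+1))^{-1}\,E_i^k\,(\ad F_i)^k$ acting on $\sih_i(m)$, and to show that the net effect is left multiplication by $(s_i\circ\Phi)$ modulo $\nminus\Mb+\Mb\ts\n$.

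The key computation is the interaction of $s_i(u)\in\U(\t)$ with $E_i^k$ and with $(\ad F_i)^k$. Left multiplication by $E_i$ shifts a $\t$-weight vector's $H_i$-eigenvalue by $+2$, so $E_i^k\,\Phi' = (\Phi' + \text{shift by }2k\text{ in }H_i)\,E_i^k$ for $\Phi'\in\U(\t)$; similarly $\ad F_i$ lowers the $H_i$-eigenvalue by $2$. The cleanest way to organize this is to observe that all of $E_i$, $\ad F_i$, and the scalar coefficients live inside the $\g_i\cong\sl_2$ action, so one can work weight-space by weight-space: it suffices to check the identity on $\sih_i(m)$ when $\sih_i(m)$ is a single $H_i$-weight vector, where everything becomes a finite rational-function manipulation in the variable $H_i$. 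One then uses the fact that the projection $\Mb\to\Zb$ kills $\nminus\Mb + \Mb\ts\n$, so only the ``top'' part of the $\sl_2$-combinatorics survives, and matching the surviving coefficient against the definition of the shifted action $s_i\circ\Phi=\Phi(s_i\circ\,\cdot\,)$ — recall $s_i\circ H_i=-H_i-2$ — produces exactly $s_i\circ\Phi$. Since $\Phi$ generates $\U(\t)$ multiplicatively together with elements commuting trivially with the $\g_i$-action, it is enough to verify the case $\Phi=H_i$ and the case $\Phi\in\ker\al_i$ (on which $s_i\circ$ acts trivially and which commutes with $E_i,F_i$); the general case follows since $\U(\t)$ is generated by these and $s_i\circ$ is an algebra automorphism.

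The main obstacle I expect is the bookkeeping in the infinite sum \eqref{M10}: commuting $s_i(u)$ through $E_i^k(\ad F_i)^k$ does not simply conjugate the coefficient $\bigl(k!\,H_i\cdots(H_i-k+1)\bigr)^{-1}$ in an obvious way, because the $H_i$ appearing there must be evaluated \emph{after} the weight shifts induced by $E_i^k$, and one must check that the reindexed sum still telescopes/collapses correctly modulo $\nminus\Mb+\Mb\ts\n$. Concretely, one should reduce to the universal case $V=0$, $M=\U(\g)$, or even to the rank-one algebra $\g_i$ acting on a single finite-dimensional $\sl_2$-isotypic component, where $\eta_i$ restricted there is a known ``extremal projector''-type operator and the identity $\eta_i(H_i\ts m)\equiv(-H_i-2)\,\eta_i(m)$ can be verified by a direct $\sl_2$-computation using $[E_i,F_i]=H_i$ and the hypergeometric-type identity $\sum_k \binom{n}{k}^{-1}(\cdots)=(\cdots)$ that already underlies Proposition 1. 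Granting Propositions 1 and 2 of \cite{KN1}, this reduces to an elementary check; without them, the $\sl_2$-module computation is the real content. Once the $\Phi=H_i$ case and the $\ker\al_i$ case are done, the extension to all of $\U(\t)$ and the passage from $M$ to $Z$ are formal, using that $\xi_i$ is induced by $\eta_i$ via Proposition 1 and that the projection $\U(\t)\ot V\to Z$ is a $\U(\t)$-bimodule map on the left factor.
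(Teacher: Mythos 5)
Your strategy is sound and is essentially the one behind the proof the paper cites (it gives none of its own, deferring to \cite[Section 3]{KN1}): conjugate $\Phi$ past $\sih_i$ to get $s_i(\Phi)$, reduce by multiplicativity of $s_i\ts\circ$ to the two generating cases $\Phi=H_i$ and $\Phi\in\ker\al_i$, and then do a rank-one computation. The one thing you leave as an assertion — that the bookkeeping in the sum \eqref{M10} "collapses correctly" — is exactly the crux, so let me confirm it closes. Writing $L_{H_i}$ for left multiplication by $H_i$ and $n=\sih_i(m)$, one has $[\ad F_i\com L_{H_i}]=2L_{F_i}$, hence $(\ad F_i)^k(H_i\ts n)=H_i\ts(\ad F_i)^k n+2k\ts F_i\ts(\ad F_i)^{k-1}n$; after applying $E_i^k$ the second piece splits via $E_i^kF_i=F_iE_i^k+k\ts E_i^{k-1}(H_i+k-1)$ into a term lying in $\nminus\Mb$ (which dies in $\Zb$) and a term that, after reindexing $k\to k+1$ and using $c_{k+1}=c_k\bigl((k+1)(H_i-k)\bigr)^{-1}$ for the coefficients $c_k$ of \eqref{M10}, contributes $-2(k+1)c_k\ts E_i^k(\ad F_i)^k n$; combined with the first piece $-c_k(H_i-2k)E_i^k(\ad F_i)^k n$ this gives $-(H_i+2)\ts\eta_i(m)=(s_i\circ H_i)\ts\eta_i(m)$, as required. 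The case $\Phi\in\ker\al_i$ is immediate since such $\Phi$ commute with $E_i$, $F_i$, $H_i$ and are fixed by $s_i\ts\circ$, and the passage from $\eta_i$ on $M$ to $\xi_i$ on $Z$ is legitimate by Proposition~1 together with the fact that left multiplication by $\overline{\U(\t)\!}\,$ preserves $\nminus\Mb+\Mb\ts\n$. One small caution: your suggestion to "work weight-space by weight-space" does not by itself turn $L_{H_i}$ into a scalar (the weight is $\ad H_i$, not $L_{H_i}$), but this does not affect the argument since you ultimately rely on the generator reduction rather than on that remark.
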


The latter proposition allows us to extend $\xi_i$ to 
$\Zb$ by setting
\begin{equation}
\label{antilin}
\xi_i(\Phi\ts z)=(s_i\circ\Phi)\,\xi_i(z)
\quad\text{for}\quad
\Phi\in\overline{\U(\t)\!}\,
\quad\text{and}\quad 
z\in Z\ts.
\end{equation}

In their present form, the operators $\xi_i$ were introduced
in \cite{KO}. We call them the \textit{Zhelobenko operators}.
The next proposition states the key property of these
operators. For its proof see \cite[Section 6]{Z1}.
Note that our notation differs from that used in \cite{KN1,KO,Z1}.

\begin{proposition}
The operators $\xi_1\lcd\xi_r$ on $\Zb$ satisfy
the braid relations
$$
\underbrace{\xi_i\,\xi_j\,\xi_i\,\ldots}_{m_{ij}}\,=\,
\underbrace{\xi_j\,\xi_i\,\xi_j\,\ldots}_{m_{ij}}\, 
\quad\text{for}\quad i\neq j\ts.
$$
\end{proposition}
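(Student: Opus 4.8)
The plan is to reduce the braid relations for $\xi_1\lcd\xi_r$ to the braid relations for the representatives $\sih_1\lcd\sih_r\in\Nr(\Tr)$ quoted at the end of Section~1. Each relation to be proved involves a single pair of indices $i\neq j\ts$; fixing such a pair and writing $m=m_{ij}\ts$, I observe that $s_is_js_i\cdots$ and $s_js_is_j\cdots$ ($m$ factors each) are two reduced expressions for the same element $w$ of the dihedral group $\langle s_i,s_j\rangle\subset W\ts$, and that, since any two reduced expressions for $w$ are connected by braid moves, it suffices to show that the composite $\xi_{i_1}\cdots\xi_{i_m}$ attached to a reduced expression for $w$ is independent of that expression. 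Because the defining formula \eqref{M10} for $\eta_i\ts$, hence for $\xi_i\ts$, uses only $E_i,F_i,H_i$ and $\sih_i\in\Gr_i\ts$, I would first pass to the semisimple rank-two subalgebra $\gp$ generated by $\g_i$ and $\g_j\ts$, of type $A_1\times A_1$, $A_2$, $B_2$ or $G_2\ts$. Some care is needed here, both because the quotient modulo $\nminus M+M\n$ must be replaced by its rank-two analogue and because composing the operators $\eta_i$ literally forces one to work inside a completion in which the relevant sums converge, that is, in the Mickelsson-algebra formalism of \cite{KO}. (Alternatively one could identify the $\xi_i$ with a version of the dynamical Weyl group operators of \cite{EV,TV} and invoke the cocycle identity established there.)

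The mechanism I would use is to factor $\eta_i$ through the adjoint action of $\sih_i\ts$. Since conjugation by $\sih_i$ negates $H_i$ and interchanges $E_i,F_i$ up to scalars, the formula \eqref{M10} can be rewritten as $\eta_i=D_i\circ\sih_i\ts$, where $\sih_i$ acts on $\Mb$ by the adjoint action and $D_i$ is an operator acting through a combination of left multiplication, right multiplication and the adjoint action of elements of the localized $\U(\g_i)\ts$, namely a realization of the $\sl_2$-extremal projector of $\g_i\ts$. The role of $D_i$ is to repair the failure of $\sih_i$ to preserve $\nminus M+M\n\ts$, which is exactly what makes $\eta_i$ descend to $\Zb$ as in Propositions~1 and~2. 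Composing along a reduced expression $s_{i_1}\cdots s_{i_m}=w$ and moving each $\sih_{i_k}$ to the right, one obtains $\eta_{i_1}\cdots\eta_{i_m}=Q_w\circ\sih_w\ts$, where $\sih_w=\sih_{i_1}\cdots\sih_{i_m}$ and $Q_w$ is the ordered product of $D_{i_1}\ts$, then $D_{i_2}$ conjugated by $\sih_{i_1}\ts$, then $D_{i_3}$ conjugated by $\sih_{i_1}\sih_{i_2}\ts$, and so on. As $\sih_w$ depends only on $w$ by the braid relations for the $\sih_i\ts$, the braid relation for the $\xi_i$ reduces to showing that $Q_w$ too depends only on $w\ts$. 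This I would do by identifying $Q_w$ with the extremal projector of the nilpotent radical $\n\cap\gp$ of $\gp\ts$, an object intrinsically attached to that nilradical and hence manifestly independent of the chosen reduced expression.

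The main obstacle is precisely this last identification: proving in rank two that the ordered product of the $\sl_2$-level corrections $D_i,D_j$ along a reduced expression for $w$ collapses to the nilradical-level extremal projector of $\gp\ts$. This is a finite computation, but a delicate one, because on $\Mb$ the corrections act by a mixture of left multiplication, right multiplication and the adjoint action, all of which must be commuted past the conjugations by the $\sih_i$ while controlling the denominators from \eqref{M2}. I would carry it out using the structural theory of extremal projectors (Asherova--Smirnov--Tolstoy), in which the annihilation properties of the rank-one factors and the Weyl-group symmetry of the nilradical projector do the bookkeeping; failing that, one verifies the four rank-two types directly. This is the content of \cite[Section~6]{Z1}, with further details given in \cite[Section~3]{KN1}, and for that reason we only quote the result here.
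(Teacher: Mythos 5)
Your proposal is consistent with the paper, which gives no independent argument for Proposition~3 but simply refers to \cite[Section~6]{Z1} (with further details in \cite{KN1,KO}); your sketch of that argument --- reduction to the rank-two subalgebra, the factorization $\eta_i=D_i\circ\sih_i$ through the $\sl_2$-extremal projectors, and the independence of the resulting product of the chosen reduced expression --- is an accurate account of the cited proof, and you end by invoking the same references. So this is essentially the same approach as the paper.
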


The squares of the Zhelobenko operators are given by the formula
$$
\xi_i^2(z)=(H_i+1)\,\sih_i^2(z)\,(H_i+1)^{-1}
\quad\text{for}\quad
z\in\Zb\ts,
$$
see \cite[Corollary 7.5]{KO}. Since $\sih_i^2\in\Tr\ts$, 
the squares $\sih_i^2$ 
and hence $\xi_i^2$ act trivially on the zero weight subspace
$\Zb{}^{\,0}\subset\Zb\ts$. 
This means that the restrictions of operators $\xi_1\lcd\xi_r$ 
to $\Zb{}^{\,0}$ define an action of the Weyl group $W$.
Note that, when applying the operator $\xi_i$ to the~coset 
$$
z=m+(\nminus M+M\n)\in Z
$$
of any $m\in M\ts$,
one can replace $E_i$ by $\ad\ts E_i$ in \eqref{M10}. That is,
modulo $\nminus\ts\Mb+\Mb\ts\n\ts$, the sum \eqref{M10} equals
$$
\sum_{k=0}^\infty\,\,
\bigl(k!\,H_i(H_i-1)\ldots(H_i-k+1)\bigr)^{-1}
(\ad\ts E_i)^k\,(\ad\ts F_i)^k\,\sih_i(m)\ts.
$$


\section{Harish-Chandra isomorphism}

By regarding $Z^{\ts0}\subset Z$ as a subspace of $\Zb$, 
let $Q$ be the subspace
of all elements of $Z^{\ts0}$ invariant under all the Zhelobenko operators :
$$
Q=\{\,z\in Z^{\ts0}\ |\ \xi_i(z)=z\ \,\text{for}\ \,i=1\lcd r\}\,.
$$

Now consider the subspace $M^{\ts\Gr}$ of $\Gr\ts$-invariants in $M\ts$. 
Define the linear map $\ga:M^{\ts\Gr}\to Z$ as the restriction to 
$M^{\ts\Gr}$ of the canonical projection $M\to Z\ts$.
It immediately follows from
the definition of the maps $\eta_i$ that 
$\eta_i(m)=m$ for any $m\in M^{\ts\Gr}$.
Hence $\gamma(M^{\ts\Gr})\subset Q\ts$.
The following theorem is the main result of this paper.

\begin{theorem}
The map $\ga$ is injective and its image $\gamma(M^{\ts\Gr})$ equals\/ $Q$.
\end{theorem}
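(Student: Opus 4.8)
The plan is to deduce Theorem~1 from Theorem~2 by passing to associated graded objects for the standard filtration of $\U(\g)\ts$, together with an explicit $\sl_2$-computation that makes the subspace $Q$ completely transparent. Throughout, equip $M=\U(\g)\ot V$ with the filtration induced by the standard filtration of $\U(\g)\ts$, placing $V$ in degree $0\ts$, so that $\gr M=\S(\g)\ot V$ and the filtration is stable under the adjoint action of $\g\ts$. I would use that the decomposition \eqref{MT} is compatible with this filtration, its associated graded being $\S(\g)\ot V=\S(\t)\ot V\op(\nminus+\n)\S(\g)\ot V$; consequently the projection $M\to Z$ is filtered with associated graded the map $\S(\g)\ot V\to\S(\t)\ot V$ of Theorem~2, and $\gr Z=\S(\t)\ot V\ts$, $\gr Z^{\ts0}=\S(\t)\ot V^{\ts0}\ts$.

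To prove that $\ga$ is injective, suppose $m\in M^{\ts\Gr}$ with $\ga(m)=0\ts$, so $m\in\nminus M+M\n\ts$. Since the filtration of $M$ is $\g$-stable, the symbol of $m$ is a $\g$-invariant element of $\S(\g)\ot V$ lying in $(\nminus+\n)\S(\g)\ot V\ts$, hence in the kernel of $\S(\g)\ot V\to\S(\t)\ot V\ts$. By the injectivity assertion of Theorem~2 this symbol vanishes, so $m=0\ts$.

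Next I would make $Q$ explicit. Fix $i\ts$. When computing $\xi_i$ on a coset one may replace $E_i$ by $\ad E_i\ts$, and both $\ad E_i$ and $\ad F_i$ act on $1\ot V\subset M$ through the module action of $\g$ on $V\ts$, while $\sih_i$ acts on $V^{\ts0}\cap V_{ij}$ by the scalar $(-1)^j\ts$. Using the $\sl_2$-identity $E_i^{\ts k}F_i^{\ts k}\ts v=\tfrac{(j+k)!}{(j-k)!}\,v$ for $v\in V^{\ts0}\cap V_{ij}$ and summing over $k\ts$, one obtains
$$
\xi_i(1\ot v)=\frac{\Psi_{ij}}{s_i\circ\Psi_{ij}}\ot v
\qquad\text{for}\qquad v\in V^{\ts0}\cap V_{ij}\ts,
$$
where $s_i\circ\Psi_{ij}$ is given by \eqref{newformula}; the summation is a standard hypergeometric identity. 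Combining this with Proposition~2, the decomposition $V^{\ts0}=\bigoplus_j(V^{\ts0}\cap V_{ij})\ts$, and the fact that $\Psi_{ij}$ and $s_i\circ\Psi_{ij}$ have coprime factorizations in the polynomial ring $\U(\t)\ts$, one finds that $z\in Z^{\ts0}=\U(\t)\ot V^{\ts0}$ satisfies $\xi_i(z)=z$ if and only if $z$ is a finite sum of terms $\Theta\,\Psi_{ij}\ot v$ with $v\in V^{\ts0}\cap V_{ij}$ and $\Theta\in\U(\t)$ invariant under $s_i\circ\,$. Hence $Q$ coincides with the subspace of Proposition~4.

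It remains to show $\ga(M^{\ts\Gr})=Q\ts$. The inclusion $\ga(M^{\ts\Gr})\subset Q$ is already known, so let $z\in Q$ and argue by induction on its filtration degree $d\ts$, viewing $z$ through the bijection $\U(\t)\ot V\to Z$ as an element of $\U(\t)\ot V^{\ts0}\subset M\ts$; we may assume $z\neq0\ts$. For each $i$ write $z$ as a sum of terms $\Theta\,\Psi_{ij}\ot v$ with $\Theta$ invariant under $s_i\circ\,$. Since the symbol of $\Psi_{ij}$ in $\S(\t)$ is $H_i^{\ts j}$ and the symbol of such a $\Theta$ is invariant under the ordinary action of $s_i$ on $\S(\t)\ts$, it follows that for each $i$ the symbol of $z$ is fixed by $s_i$ and its component in each $\S(\t)\ot(V^{\ts0}\cap V_{ij})$ is divisible by $H_i^{\ts j}$; hence the symbol of $z$ is a $W$-invariant element of $\S(\t)\ot V^{\ts0}$ satisfying the divisibility condition of Theorem~2. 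By Theorem~2 this symbol, being homogeneous of degree $d\ts$, is the image of a homogeneous $\g$-invariant $\overline m\in(\S(\g)\ot V)^{\g}$ of degree $d\ts$. Because $M$ is a locally finite, hence semisimple, $\Gr$-module, the quotient map $M^{\le d}\to\gr_d M$ restricts to a surjection on $\Gr$-invariants, so $\overline m$ lifts to some $m'\in(M^{\le d})^{\ts\Gr}$ with symbol $\overline m\ts$. Then $\ga(m')$ has the same symbol as $z$ in degree $d\ts$, so $z-\ga(m')\in Q$ has filtration degree $<d\ts$, and $z\in\ga(M^{\ts\Gr})$ by induction. The two points requiring care here are the compatibility of \eqref{MT} and of the projection $M\to Z$ with the standard filtration asserted in the first paragraph — which, though expected by analogy with the classical Harish--Chandra situation, must be checked in the present tensored setting since it is what makes Theorem~2 applicable — and the $\sl_2$-evaluation above, because it is that formula, via Proposition~2, which converts the a priori implicit condition defining $Q$ into the explicit divisibility condition of Theorem~2 upon passing to symbols; I expect these to be the main obstacles.
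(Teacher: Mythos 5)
Your proof is correct and follows essentially the same route as the paper: injectivity and surjectivity onto $Q$ are both reduced to Theorem~2 by passing to the associated graded objects, using the explicit description of $Q$ (your $\sl_2$-computation reproduces Proposition~4, which the paper has already established in Section~4) to check that symbols of elements of $Q$ satisfy the $W$-invariance and $H_i^{\ts j}$-divisibility conditions. The paper merely compresses your induction on filtration degree into the remark that it suffices for the image of $\gr\ga$ to contain $\gr Q$.
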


A proof of this theorem will be given in Section~6.

When $V=\CC$ and $M=\U(\g)$, our theorem reproduces the classical 
description of the centre of the universal enveloping algebra $\U(\g)$ 
due to Harish-Chandra \cite{HC}. 
In the general case, we will call the map $\ga:M^{\ts\Gr}\to Q$ 
the \textit{Harish-Chandra isomorphism\/} for $M=\U(\g)\ot V$.

\begin{remark}
To justify this terminology further, let us consider the
special case of our general setting, when $M$ is also
an associative 
algebra which contains
$\U(\g)$ as a subalgebra, such that the $\U(\g)\ts$-bimodule
structure 
on $M$ comes from the multiplication in $M\ts$.
As an algebra, $M$
is then generated by its subspaces $\g$ and $1\ot V$,
and we have the commutation relations
$$
[X\com 1\ot v]=1\ot(Xv)
\quad\text{for}\quad
X\in\g
\quad\text{and}\quad
v\in V\ts.
$$

The group $\Gr$ then acts on $M$ by algebra automophisms,
and the subspace $M^{\ts\Gr}\subset M$ is a subalgebra. 
The vector subspaces $\nminus M$ and $M\n$ of $M$ now become
the right and left ideals generated by $\nminus$ and $\n$ 
respectively. Multiplication of the cosets in $Z$
of elements of $M^{\ts\Gr}$ 
by using the algebra structure on $M$ is now well defined.
The map $\ga:M^{\ts\Gr}\to Q$
becomes an isomorphism of algebras.

Furthermore, one can equip the vector space $\Zb$ 
with a natural structure of an associative algebra, 
such that the subspace $Q\subset\Zb$ with the above 
defined multiplication is a subalgebra. Namely, 
there is a certain completion $\,\widetilde{\!\U(\g)\!}\,$ 
of the algebra $\overline{\U(\g)\!}\,$ and a unique 
element $P$ of this completion such that
\begin{gather*}
P^2=P,\quad\n P=P\nminus=0\ts,
\\
P\in(1+\nminus\,\widetilde{\!\U(\g)\!}\,)
\cap(1+\,\widetilde{\!\U(\g)\!}\,\n)\ts.
\end{gather*}
The element $P$ 
is called the \textit{extremal projector\/} for the Lie algebra $\g\ts$; 
its definition is due to 
R.\,Asherova, Y.\,Smirnov and V.\,Tolstoy \cite{AST}. 
Multiplication in $\Zb$ is defined by setting the product of 
the cosets of two elements $m\com n\in\Mb$ to be the coset 
of $m\ts P\ts n\ts$. 
Then $\Zb$ is called the \textit{Mickelsson algebra}, 
see \cite[Section~3]{KO} for details of this definition, 
and for links of $\Zb$ to other Mickelsson algebras.
Each 
$\xi_i$ is an automorphism of the algebra $\Zb$ 
by \cite[Section~5]{KO}. The projection $\ga:M^{\ts\Gr}\to\Zb$ 
becomes an injective homomorphism of algebras with the image equal to~$Q$.
\end{remark}


\section{Description of the space\ $Q$}

In this section, we identify the vector space $V$ with
the image of the subspace $1\ot V\subset\Mb$ under the
canonical projection $\Mb\to\Zb\ts$. Then we can write 
$Z=\U(\t)\ts V$ and $Z^{\ts0}=\U(\t)\ts V^{\ts0}\ts$.
If we decompose $V$ into a sum of irreducible $\g_i\ts$-submodules, 
then the zero weight subspace $V^{\ts0}$ will lie in the sum
of odd-dimensional summands of $V$. For each $j=0,1,2,\ldots$ 
denote by $V_{ij}$ the sum of $(2j+1)$-dimensional irreducible 
$\g_i\ts$-submodules of $V$.
It follows from \eqref{antilin} and the observation
made at the very end of Section 2, that
$$
\xi_i(\ts\U(\t)\ts V_{ij})\subset\overline{\U(\t)\!}\,\ts V_{ij}\ts.
$$
Hence for each $i=1\lcd r$
the subspace $Q\subset\U(\t)\ts V^{\ts0}$ is the sum of 
its intersections with the subspaces
$\U(\t)\ts(V^{\ts0}\cap V_{ij})\ts$.

Take any vector $v\in V^0\cap V_{ij}\ts$. Then $\sih_i(v)=(-1)^j v$ and
for each $k=0,1\lcd j$ 
$$
(\ad\ts E_i)^k\,(\ad\ts F_i)^k\,v=
(j-k+1)(j-k+2)\ldots(j+k-1)(j+k)\,v\ts,
$$
while for $k>j$ we have
$$
(\ad\ts E_i)^k\,(\ad\ts F_i)^k\,v=0\ts.
$$
Hence
$$
\xi_i(v)=(-1)^j\biggl(\,\sum_{k=0}^j\,
\frac{(j-k+1)(j-k+2)\ldots(j+k-1)(j+k)}
{k!\,H_i(H_i-1)\ldots(H_i-s+1)}\biggr)\ts v\ts.
$$
The sum in brackets is a particular value 
$\mathrm{F}\ts(-j\com j+1\com -H_i\ts;1)$
of the hypergeometric function $\mathrm{F}\ts$. 
By the Gauss formula
$$
\mathrm{F}(a,b,c\ts;1)=
\frac
{\,\mathrm\Gamma(c)\ts\mathrm\Gamma(c-a-b)}
{\,\mathrm\Gamma(c-a)\ts\mathrm\Gamma(c-b)}
$$
valid for $a,b,c\in\CC$ with $c\neq 0,-1,\ldots$ 
and $\mathrm{Re}\ts(c-a-b)>0$,~we~get
\begin{align*}
\nonumber
\xi_i(v)&=(-1)^j
\frac
{\mathrm\Gamma(-H_i)\ts\mathrm\Gamma(-H_i-1)}
{\mathrm\Gamma(-H_i+j)\ts\mathrm\Gamma(-H_i-j-1)}\,v 
\\[4pt]
&=(-1)^j
\frac{\,(H_i+2)(H_i+3)\ldots(H_i+j+1)}{\,H_i(H_i-1)\ldots(H_i-j+1)}\,v
=
\frac{\,\Psi_{ij}}{s_i\circ\Psi_{ij}}\,v
\end{align*}
where $\Psi_{ij}$ has been defined in the Introduction. 
It now follows from Proposition~2 that 
for any $\Phi\in\U(\t)$ we have
$$
\xi_i\ts(\Phi\ts v)=(s_i\circ\Phi)\,
\frac{\Psi_{ij}}{s_i\circ\Psi_{ij}}\ts\,v\ts.
$$
In particular, the equality $\xi_i\ts(\Phi\ts v)=\Phi\ts v$ holds
if and only if
$$
(s_i\circ\Phi)\,\Psi_{ij}=\Phi\,(s_i\circ\Psi_{ij})\ts. 
$$
But it follows from \eqref{newformula} that $\Psi_{ij}$ and
$s_i\circ\Psi_{ij}$ are mutually prime as polynomials on $\t^*\ts$.
Hence the last displayed
equality holds if and only if $\Phi$ is divisible by $\Psi_{ij}$ and
the ratio $\Phi\ts/\ts\Psi_{ij}$ 
is invariant under the shifted action of $s_i\ts$. 
Thus we get 

\begin{proposition} 
An element $z\in Z^{\ts0}=\U(\t)\ts V^{\ts0}$ 
lies in the subspace $Q$ 
if and only if for each $i=1\lcd r$ one can write $z$ 
as a finite sum of products of the form
$\Theta\,\Psi_{ij}\ts v$ where $v\in V^{\ts0}\cap V_{ij}\ts$,
$\Theta\in\U(\t)$ and $s_i\circ\Theta=\Theta\ts$.
\end{proposition}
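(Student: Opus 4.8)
The plan is to assemble the computations carried out just above with two structural facts already in the text: that $Z^{\ts0}=\U(\t)\ts V^{\ts0}$ splits, for each fixed $i$, as the direct sum $\bigoplus_j\U(\t)\ts(V^{\ts0}\cap V_{ij})$, and that $\Zb$ is free as a left $\overline{\U(\t)\!}\,$-module, whence $\Zb{}^{\ts0}$ likewise splits as $\bigoplus_j\overline{\U(\t)\!}\,\ts(V^{\ts0}\cap V_{ij})$. First I would fix $i$ and reduce to a single isotypic summand. Writing $z\in Z^{\ts0}$ as $\sum_j z_j$ with $z_j\in\U(\t)\ts(V^{\ts0}\cap V_{ij})$, the inclusion $\xi_i(\U(\t)\ts V_{ij})\subset\overline{\U(\t)\!}\,\ts V_{ij}$ recorded above gives $\xi_i(z_j)\in\overline{\U(\t)\!}\,\ts(V^{\ts0}\cap V_{ij})$, so by the decomposition of $\Zb{}^{\ts0}$ the equation $\xi_i(z)=z$ is equivalent to the family of equations $\xi_i(z_j)=z_j$, one for each $j$. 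It therefore suffices to describe the $\xi_i$-invariants inside a single space $\U(\t)\ts(V^{\ts0}\cap V_{ij})$.

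For that I would pick a basis $v_1\lcd v_m$ of the finite-dimensional space $V^{\ts0}\cap V_{ij}$ and write $z_j=\sum_{a=1}^m\Phi_a\ts v_a$ with $\Phi_a\in\U(\t)$. By the semilinearity \eqref{antilin} together with the value $\xi_i(v_a)=\frac{\Psi_{ij}}{s_i\circ\Psi_{ij}}\,v_a$ computed above, one has $\xi_i(z_j)=\sum_a(s_i\circ\Phi_a)\,\frac{\Psi_{ij}}{s_i\circ\Psi_{ij}}\,v_a$; since $v_1\lcd v_m$ are linearly independent over $\overline{\U(\t)\!}\,$ in the free module $\Zb$, the equality $\xi_i(z_j)=z_j$ is equivalent to the system $(s_i\circ\Phi_a)\,\Psi_{ij}=\Phi_a\,(s_i\circ\Psi_{ij})$ for $a=1\lcd m$. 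Invoking the coprimality of $\Psi_{ij}$ and $s_i\circ\Psi_{ij}$ in the polynomial ring $\U(\t)$, established above via \eqref{newformula}, each $\Phi_a$ is then forced to have the form $\Psi_{ij}\,\Theta_a$ with $s_i\circ\Theta_a=\Theta_a$, so $z_j=\sum_a\Theta_a\,\Psi_{ij}\,v_a$ is a finite sum of the asserted type; summing over $j$ proves the ``only if'' direction. For the ``if'' direction I would simply verify that each summand $\Theta\,\Psi_{ij}\,v$ with $v\in V^{\ts0}\cap V_{ij}$ and $s_i\circ\Theta=\Theta$ is $\xi_i$-invariant: by \eqref{antilin} and the same value of $\xi_i(v)$,
$$
\xi_i(\Theta\,\Psi_{ij}\,v)=(s_i\circ\Theta)\,(s_i\circ\Psi_{ij})\,\frac{\Psi_{ij}}{s_i\circ\Psi_{ij}}\,v=\Theta\,\Psi_{ij}\,v\ts,
$$
so any finite sum of such terms is fixed by $\xi_i$; since such an expression is assumed available for every $i$, such a $z$ then lies in $Q$.

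I do not expect a real obstacle, as the two substantive ingredients --- the hypergeometric evaluation of $\xi_i$ on $V^{\ts0}\cap V_{ij}$ and the coprimality of $\Psi_{ij}$ with $s_i\circ\Psi_{ij}$ --- are already supplied in the text. The only point that needs a little care is that the argument runs one index $i$ at a time: membership in $Q$ requires $\xi_i(z)=z$ simultaneously for all $i$, whereas the expression $z=\sum_j\Theta_j\,\Psi_{ij}\,v_j$ produced above depends on $i$ through the $\g_i$-isotypic decomposition of $V^{\ts0}$. The statement is phrased precisely so as to permit a different such expression for each $i$, so the per-$i$ equivalences combine to give the claim with no further work.
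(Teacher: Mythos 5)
Your proposal is correct and follows essentially the same route as the paper: reduce to a single $\g_i$-isotypic component via the stability $\xi_i(\U(\t)\ts V_{ij})\subset\overline{\U(\t)\!}\,\ts V_{ij}$, apply the computed value $\xi_i(v)=\Psi_{ij}\,(s_i\circ\Psi_{ij})^{-1}v$ together with the semilinearity \eqref{antilin}, and conclude from the coprimality of $\Psi_{ij}$ and $s_i\circ\Psi_{ij}$. Your extra care with the basis of $V^{\ts0}\cap V_{ij}$ and linear independence over $\overline{\U(\t)\!}\,$ only makes explicit what the paper leaves implicit.
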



\section{Symmetric algebra calculation}

Let $\S(\g)$ be the symmetric algebra of $\g\ts$.
Consider the filtration on the $\U(\g)\ts$-bimodule $M=\U(\g)\ot V$ 
arising from the natural filtration on the algebra $\U(\g)$. Let
$\gr M=\S(\g)\ot V$ be the associated graded $\S(\g)\ts$-module. 
The action of the group $\Gr$ on $M$ induces an action of $\Gr$ 
on $\gr M$, which  coincides with the 
diagonal action of $\Gr$ on 
$\S(\g)\ot V$. The graded subspace of $\gr M$ associated to
$M^{\ts\Gr}$ is $(\gr M)^{\ts\Gr}$.

The graded space associated to $Z=M/(\nminus M+M\n)$ is
$$
\gr Z=\gr M/((\nminus +\n)\gr M)=
(\ts\S(\g)/(\nminus+\n)\ts\S(\g))\ot V.
$$
This space inherits a structure of an $\S(\t)\ts$-module. 
By identifying the quotient vector space
$\S(\g)/(\nminus+\n)\ts\S(\g)$ with $\S(\t)\ts$,
we can identify $\gr Z$ with $\S(\t)\ot V\ts$.
Then $\gr Z^{\ts0}$ gets identified with $\S(\t)\ot V^{\ts0}\ts$.

The map $\ga:M^{\ts\Gr}\to Z$ defined in Section 3 
induces a map $\gr\ga:(\gr M)^{\ts\Gr}\to\gr Z$, 
which is nothing but the restriction to $(\gr M)^{\ts\Gr}$ 
of the canonical projection $\gr M\to\gr Z\ts$.
Note that, when passing to the graded objects, 
the shifted action of the Weyl group $W$ on $\U(\t)$ 
becomes its usual action on $\S(\t)\ts$. Obviously, 
the image of $\gr\ga$ lies in $(\gr Z^{\ts0}){}^W$. 
The next theorem describes this image precisely.

\begin{theorem}
The map $\gr\ga$ is injective. Its image consists of all 
$W$-invariant elements 
$F\in\gr Z^{\ts0}=\S(\t)\ot V^{\ts0}$ 
such that for each $i=1\lcd r$ and
$j=0,1,2,\ldots$ the projection of $F$ to\/
$\S(\t)\ot(V^{\ts0}\cap V_{ij})$ is divisible by $H_i^j$ in 
the first tensor factor.
\end{theorem}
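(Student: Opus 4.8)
Our plan is to realize everything in terms of equivariant polynomial maps. Fix a nondegenerate invariant symmetric bilinear form on $\g$ and use it to identify $\g$ with $\g^*$; then $\S(\g)$ becomes the algebra of polynomial functions on $\g$, the ideal $(\nminus+\n)\ts\S(\g)$ becomes the ideal of functions vanishing on $\t$, and the projection $\gr M=\S(\g)\ot V\to\S(\t)\ot V=\gr Z$ becomes restriction of $V$-valued polynomial maps from $\g$ to $\t$. Under this identification $(\gr M)^{\Gr}$ is the space of $\Gr$-equivariant polynomial maps $\f\colon\g\to V$, and $\gr\ga(\f)=\f|_{\t}$. Injectivity of $\gr\ga$ is then immediate: the union $\Gr\cdot\t$ of all Cartan subalgebras contains the dense open subset of regular semisimple elements, so an equivariant $\f$ vanishing on $\t$ vanishes identically.

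For the inclusion of the image in the set described by the theorem, observe first that for regular $t\in\t$ the centralizer of $t$ in $\Gr$ is the maximal torus $\Tr$, so $\f(t)$ is $\Tr$-fixed and hence lies in $V^{\ts0}$; therefore $\f|_{\t}\in\S(\t)\ot V^{\ts0}$. It is $W$-invariant because $\f\mapsto\f|_{\t}$ intertwines the $\Gr$-action with the $\Nr(\Tr)$-action, and $\Tr$ acts trivially on $V^{\ts0}$. For the divisibility condition, fix $i$, put $\mathfrak{h}_i=\ker\al_i\subset\t$, and restrict $\f$ to the reductive subalgebra $\mathfrak{l}_i=\mathfrak{h}_i\op\g_i=\t\op\CC E_i\op\CC F_i$. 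Since $\mathfrak{h}_i$ is central in $\mathfrak{l}_i$, for each fixed $u\in\mathfrak{h}_i$ the map $y\mapsto\f(u+y)$ is a $\Gr_i$-equivariant polynomial map $\g_i\to V$; its component in $V_{ij}$ lies in $(\S(\g_i)\ot V_{ij})^{\Gr_i}$, which is free over $\S(\g_i)^{\Gr_i}=\CC[c_i]$ (here $c_i$ is the Casimir of $\g_i$) with generators homogeneous of degree $j$ — because the harmonic part of $\S(\g_i)$ contains the $(2j+1)$-dimensional irreducible exactly once, and in degree $j$ (a special case of Kostant's theorem). Restricting further to the line $\CC H_i\subset\g_i$, on which $c_i$ is a nonzero multiple of $H_i^2$ and a degree-$j$ equivariant map takes the form $z^jw$ with $w\in V^{\ts0}\cap V_{ij}$, we find that the $V_{ij}$-component of $\f|_{\t}$, as a polynomial in the coordinates $(u,z)$ on $\t=\mathfrak{h}_i\op\CC H_i$, is divisible by $z^{j}$, hence divisible by $H_i^{\ts j}$ in $\S(\t)$ (up to a scalar, $H_i\in\S(\t)$ is the linear form on $\t$ cutting out $\mathfrak{h}_i$). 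This proves the image is contained in the stated set.

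Conversely, let $F\in\S(\t)\ot V^{\ts0}$ be $W$-invariant and satisfy the divisibility conditions. Using the $\Nr(\Tr)$-equivariance of $F$ and the fact that a regular semisimple element has centralizer $\Tr$, the assignment $\tilde\f(\Ad(g)\ts t)=g\cdot F(t)$ for regular $t\in\t$ is a well-defined $\Gr$-equivariant regular map $\tilde\f$ on the regular semisimple locus of $\g$. This locus is the complement of a hypersurface, namely the preimage of the discriminant divisor under the adjoint quotient $\g\to\t/W$; since $\g$ is smooth, its coordinate ring is a Krull domain, so it suffices to extend $\tilde\f$ across the generic point of each irreducible component of this hypersurface. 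At such a generic point the semisimple part is conjugate to a generic element $t_0$ of some reflection hyperplane $\mathfrak{h}_i$; applying Luna's \'etale slice theorem at $t_0$, whose centralizer in $\g$ is exactly $\mathfrak{l}_i=\mathfrak{h}_i\op\g_i$, reduces the extension problem to $\mathfrak{l}_i$, where $\tilde\f$ is already defined away from the locus $\{c_i=0\}$. There, for each $u\in\mathfrak{h}_i$, our map is a combination of the degree-$j$ equivariant generators with coefficients in $\CC[\ts c_i^{\pm1}]$; restricting to $\CC H_i$ and comparing with $F$ shows, using divisibility by $H_i^{\ts j}$ together with the parity in $z$ forced by $s_i$-invariance (recall that $\sih_i$ acts on $V^{\ts0}\cap V_{ij}$ by $(-1)^j$), that in fact only nonnegative powers of $c_i$ occur. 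Hence $\tilde\f$ extends regularly across each such component, and so over all of $\g$, producing $\f\in(\gr M)^{\Gr}$ with $\gr\ga(\f)=F$.

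The main obstacle is the surjectivity in the last paragraph — specifically the reduction, via the \'etale slice theorem, of the extension problem across the discriminant to the rank-one subalgebra $\mathfrak{l}_i=\mathfrak{h}_i\op\g_i$, together with the bookkeeping that converts ``divisible by $H_i^{\ts j}$ and $s_i$-invariant'' into ``no negative powers of the Casimir $c_i$''. The passage from a semisimple to a merely reductive $\g$ is routine, the central torus of $\Gr$ playing no role.
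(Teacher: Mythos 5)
Your proposal is correct in substance and follows the paper's global skeleton: you identify $(\gr M)^{\Gr}$ with $\Gr\ts$-equivariant polynomial maps $\g\to V$ and $\gr\ga$ with restriction to $\t\ts$, deduce injectivity from density of the regular semisimple locus, extend a $W$-equivariant $\f$ over $\g_{\ts\rm sr}$ via the fibering $\Gr*_{\,\Nr(\Tr)}\t_{\ts\rm reg}\cong\g_{\ts\rm sr}\ts$, and reduce the remaining extension problem to the generic points of the components of the complementary divisor. Where you genuinely diverge is at the key local step. The paper settles both the necessity of the divisibility condition and the regularity across $D_i$ by a single elementary computation along the explicit curve $X(t)=(\Ad\exp t^{-1}E_i)(H+tH_i)=H+tH_i-2E_i\ts$, reading off that $F_{ij}(X(t))=\sum_{k=0}^j(t^k k!)^{-1}E_i^k\,\f_{ij}(H+tH_i)$ stays finite exactly when $\f_{ij}(H+tH_i)$ is divisible by $t^{\ts j}$. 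You instead use Kostant's freeness theorem for $\sl_2\ts$-covariants (generators of $(\S(\g_i)\ot V_{ij})^{\Gr_i}$ in degree $j$ over $\CC[c_i]$) combined with Luna's \'etale slice theorem at a generic point of $\ker\al_i\ts$. Your route is more conceptual and explains structurally why the exponent is $j\ts$, at the cost of heavier machinery and of two under-justified points: first, the slice theorem only gives regularity of $\tilde F$ on a saturated \'etale neighbourhood, so for fixed $u$ the $V_{ij}\ts$-component is not a global section over $\g_i\setminus\{c_i=0\}$ and its coefficients are not literally in $\CC[\ts c_i^{\pm1}]$ --- one must localize at the generic point of $\{c_i=0\}$ and use that the freeness statement survives localization; second, the restriction of a degree-$j$ generator to $\CC H_i$ is $z^{\ts j}w$ with $w$ a priori only in the zero $H_i\ts$-weight space of $V_{ij}\ts$, not in $V^{\ts0}$ (harmless, but worth saying). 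Finally, the parity argument via $\sih_i$ acting by $(-1)^j$ is not needed: once the coefficients $p_k(u,c_i)$ restrict on $\t$ to polynomials in $z\ts$, the fact that $p_k(u,\lambda z^2)$ is an even Laurent series in $z$ already rules out negative powers of $c_i\ts$. These are repairable imprecisions rather than fatal gaps.
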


\begin{proof}
By identifying $\g^*$ with $\g$ using a $\Gr\ts$-invariant inner product 
on $\g$, we can regard the elements of $\gr M$ as morphisms 
(polynomial maps) from $\g$ to $V$. Then $(\gr M){}^{\ts\Gr}$ 
is identified with the space of $\Gr\ts$-equivariant morphisms 
from $\g$ to $V$, and the map
$\gr\ga$ is interpreted as the restriction 
of these morphisms to $\t\ts$.

Since generic $\Gr\ts$-orbits in $\g$ intersect $\t\ts$, 
a $\Gr\ts$-equivariant morphism $F:\g\to V$ is uniquely 
determined by its restriction to $\t$. This shows that 
the map $\gr\ga$ is injective.

Now let $\f:\t\to V^{\ts0}$ be a $W$-equivariant morphism. 
Let us try to extend it to a $\Gr\ts$-equivariant morphism 
from $\g$ to $V$. To this end, we will first show that $\f$ 
can be extended to the subset $\g_{\rm\ts sr}$ 
of semisimple regular elements of $\g\ts$.

Consider the complement
$D=\g\ts\backslash\ts\g_{\rm\ts sr}\ts$.
This is the set of the elements $X\in\g$
such that the semisimple part 
of $X$ is not regular, 
that is the multiplicity of the zero root of the characteristic
polynomial $\det\ts(\ts t\cdot 1-\ad\ts X)$ of $\ad\ts X$
is bigger than $\dim\t\ts$. Hence $D$ is the divisor
defined by the equation $P(X)=0\ts$, where $P(X)$
is the coefficient of $t^{\,\dim\t}$ in
the characteristic polynomial of $\ad\ts X\ts$.
In particular, $\g_{\ts\rm sr}$ is Zariski open in $\g\ts$.

The geometry of the action of $\Gr$ on $\g_{\ts\rm sr}$ 
is described as follows. Let $\t_{\ts\rm reg}=\t\cap\g_{\ts\rm sr}$ 
be the set of regular elements of $\t\ts $. 
Consider the homogeneous fibering 
\begin{equation}
\label{fibering}
\Gr*_{\,\Nr(\Tr)}\,\t_{\ts\rm reg}=(\Gr\times\t_{\ts\rm reg})/\Nr(\Tr)
\end{equation}
where the action of any $n\in\Nr(\Tr)$ on $\Gr\times\t_{\ts\rm reg}$ 
is defined by
$$
n:\,(g,H)\mapsto(g\,n^{-1},(\Ad\ts n)\ts H\ts)\ts.
$$
The quotient in (9) is geometric, 
see for instance \cite[Section 4.8]{PV}.
Denote by $\langle g,H\rangle$ the element of 
\eqref{fibering}
with a representative $(g,H)\in\Gr\times\t_{\ts\rm reg}\ts$.
The action of $\Gr$ on $\Gr\times\t_{\ts\rm reg}$ defined~by 
$$
g':\,(g,H)\mapsto(g'g,H)
$$ 
commutes with the action of $\Nr(\Tr)$ and
induces an action of $\Gr$ on the quotient \eqref{fibering}. 
We have a $\Gr\ts$-equivariant morphism
$$
p:\,
\Gr*_{\,\Nr(\Tr)}\,\t_{\ts\rm reg}\to\g_{\ts\rm sr}:\, 
\langle g,H\rangle\mapsto(\Ad\ts g)\ts H\ts.
$$
Since any regular semisimple element of $\g$ is conjugate by $\Gr$ 
to an element of $\t_{\ts\rm reg}\ts$, and two elements of 
$\t_{\ts\rm reg}$ are $\Gr\ts$-conjugate if and only if they are
$W$-conjugate, $p$ is an isomorphism of algebraic varieties.

Since the morphism
$$
\Gr\times\t_{\ts\rm reg}\to V:\,
(g,H)\mapsto g\,\f(H)
$$
sends each $\Nr(\Tr)$-orbit to one point, it factors through a morphism
$$
\Gr*_{\,\Nr(\Tr)}\,\t_{\ts\rm reg}\to V:\,
\langle g,H\rangle\mapsto g\,\f(H)\ts.
$$
Hence there is a morphism
$\tilde{F}:\g_{\ts\rm sr}\to V$
such that
$$
\tilde{F}((\Ad\ts g)\ts H)=g\,\f(H)
\quad\text{for}\quad 
g\in G
\quad\text{and}\quad 
H\in\t_{\ts\rm reg}.
$$
Clearly, $\tilde{F}$ is $\Gr\ts$-equivariant 
and coincides with $\f$ on $\t_{\ts\rm reg}\ts$.

This means that $\f$ can be extended to a $\Gr\ts$-equivariant rational map
$F:\g\to V$ which is regular (and coincides with $\tilde{F}$) 
on $\g_{\ts\rm sr}\ts$. The question is whether $F$ is regular 
on the whole of $\g\ts$. To find out this, we should study the 
behaviour of $F$ on the irreducible components of the divisor $D\ts$.

The divisor $D$ can be described as follows. 
For any $H\in\t$ denote by $\g_H$ the set of elements of $\g$ 
whose semisimple part is conjugate to $H$. 
Then $\g_H$ is a fiber of the categorical quotient map
$\g\to\g\,/\Gr\ts$.
Hence $\g_H$ is an irreducible closed
subvariety of codimension $\dim\t\,$;
see \cite[Section 4.4]{PV}. 
It is the closure of the only orbit open in it, 
namely of the orbit of
$H+U$ where $U$ is a regular nilpotent element of the centralizer of $H\ts$.
It follows that the irreducible
components of $D$ are
$$
D_{\al}=\bigcup_{\substack{H\in\t \\ \al(H)=0}}\,\g_H
$$
where $\al$ runs over representatives of $W$-orbits in $\Delta\ts$. Note that these representatives can be chosen in the set
$\{\al_1\lcd\al_r\}\ts$.

Let $\al_i$ be any simple root and $D_i=D_{\al_i}\ts$. Let
$C_i$ be the set
$$
\{\ts H-2E_i
\ |\ 
H\in\t,\ 
\al_i(H)=0,\ 
\al(H)\neq 0 \ 
\text{for}\ 
\al\in\De\setminus\{\al_i\com-\al_i\}\ts\}\ts.
$$
The subset 
$
(\Ad\Gr)\,C_i\subset\g
$
is Zariski open in the divisor $D_i\ts$. 
If the rational map $F$ has a pole at $D_i\ts$, 
then there exists a point $H-2E_i\in C_i$ such that $F(X)$ 
tends to infinity every time when $X\in\g_{\ts\rm sr}$ tends to
$H-2E_i$. Take such a point and consider the curve
$$
X(t)=(\Ad\ts(\exp t^{-1}E_i))(H+tH_i)=H+tH_i-2E_i.
$$
Here $X(t)\in\g_{\ts\rm sr}$ for $t\neq 0\ts$, and $X(t)$ 
tends to $X(0)=H-2E_i$ when $t$ tends~to $0\ts$. We have
$$
F(X(t))=(\exp t^{-1}E_i)\,\f(H+tH_i)\ts.
$$
For the projection $F_{ij}(X(t))$ of $F(X(t))$ 
to the $\g_i\ts$-isotypic component $V_{ij}$ of $V$, we have
$$
F_{ij}(X(t))
=(\exp t^{-1}E_i)\,\f_{ij}(H+tH_i)
=\sum_{k=0}^j{(\ts t^{k}\ts {k!}\ts)^{-1}\,E_i^k\,\f_{ij}(H+tH_i)}\ts,
$$
where $\f_{ij}(H+tH_i)$ stands for the projection of 
$\f(H+tH_i)$ to $V_{ij}\ts$. Thus $F_{ij}(X(t))$ does not tend 
to infinity if and only if $\f_{ij}(H+tH_i)$ 
is divisible by $t^{\ts j}\ts$. 
Using our identification of $\t^*$ with $\t\ts$,
the latter condition means that $\f_{ij}$ is divisible by $H_i^j\ts$.
\qed
\end{proof}

\begin{remark}
As the proof shows, 
it suffices to check the divisibility condition 
of Theorem~2 for simple roots representing all $W$-orbits in
$\Delta\ts$. In particular, if $\g$ is simple with all roots of 
the same length, then it suffices to check this condition 
for one (arbitrary) simple root.
\end{remark}

\begin{remark}
\label{R3} 
A result of Broer \cite{B}
(in its elegant reformulation due to Panyushev \cite{P})
says that the image of $\gr\ga$ coincides with the space
of $W$-invariant elements of $\S(\t)\otimes V^{\ts0}$ 
if and only if $2\al$ is not a weight of $V$ for any $\al\in\De\ts$. 
The last condition implies that 
$V^{\ts0}\cap V_{ij}=\{0\}$
for $i=1\lcd r$ and $j\geqslant2\ts$. If
this holds, then the condition of Theorem~2 on
$F\in\S(\t)\ot V^{\ts0}$ also holds.
Indeed, for $j=1$ the divisibility
of the projection of $F$ to $\S(\t)\ot(V^{\ts0}\cap V_{ij})$
by $H_i$ follows from the $W$-invariance of $F\ts$.
So Theorem 2 gives the sufficiency of Broer's condition.
It would be instructive to deduce 
the necessity of Broer's condition
from Theorem~2 as well.
\end{remark}


\section{Proof of  Theorem~1}

The injectivity of $\ga$ immediately follows from the injectivity 
of $\gr\ga$. To prove that the image of $\ga$ is the whole 
of the space $Q$,
it suffices to prove that the image of $\gr\ga$ contains 
(and then coincides with) $\gr Q$.

In the notation of Proposition~4, the leading term of
$\Psi_{ij}$ is $H_i^j$, while the leading term of $\Theta$ 
is invariant under $s_i\ts$. Thus if $z\in Q$, then
the leading term of $z$ is invariant under the Weyl group $W$ 
and its projection to $\S(\t)\ot(V^{\ts0}\cap V_{ij})$ is divisible 
in the first tensor factor by $H_i^j$
for $i=1\lcd r$ and $j=0,1,2,\ldots\,\,$. 
By Theorem~2 this means that the leading term of 
$z$ lies in the image of $\gr\ga$. We get Theorem~1.

\begin{remark}
There is a version of Theorem~1
for disconnected groups. 
Suppose that the action of $\Gr$
on $V$ extends to a linear action of 
an algebraic group $\widehat{\Gr}$ containing
$\Gr$ as the connected component. 
Then the action of $\Gr$ on $M$ naturally extends
to an action of $\widehat{\Gr}$ 
leaving~$M^{\ts\Gr}$ invariant, 
and 
$$
M^{\ts\widehat{\Gr}}=
(M^{\ts\Gr}){}^{\ts\widehat{\Gr}/\hspace{-1pt}\Gr}\ts.
$$

First let $\Gr$ be semisimple.
The adjoint action defines a homomorphism
$\widehat{\Gr}\to\rm{Aut}\,\g$ to the group of
automorphisms of $\g\ts$. It is well known that
$\rm{Aut}\,\g$ is a semidirect product of the group of inner automorphisms
by the (finite) group of diagram automorphisms. 
Denote by $S$ the inverse image of 
the latter group in $\widehat{\Gr}\ts$,
so that $\widehat{\Gr}=S\Gr\ts$. 
Then $S$ normalizes $\Tr\ts$. By definition, $S$ permutes 
$E_1\lcd E_r$ and (in the same way) permutes $F_1\lcd F_r\ts$.
Therefore $S$ leaves the subspace $\nminus M+M\n\subset M$ 
invariant, and thus acts on $Z$.
Besides, $S$ preserves the set of denominators \eqref{M2}, 
which allows us to extend the action of $S$ to $\Mb$ and to $\Zb\ts$.
Moreover, the elements $\sih_1\lcd\sih_r\in\Nr(\Tr)$
can be chosen so that $S$ permutes them
in the same way as it does with $E_1\lcd E_r\ts$.
Then $S$ also permutes the Zhelobenko operators. It also leaves the 
subspace $Z^{\ts0}\subset Z$ invariant.
Hence $S$ also leaves the subspace $Q\subset Z$ invariant. 
The map $\ga:M^{\ts\Gr}\to Z$
is $S$-equivariant. Thus the image of 
$M^{\ts\widehat{\Gr}}=(M^{\ts\Gr}){}^{\ts S}$ 
under this map is $Q^{\ts S}\ts$,
the subspace of $S$-invariant elements of $Q\ts$.

Now let $\Gr$ be reductive. Then consider
the natural homomorphism $\widehat{\Gr}\to\rm{Aut}\,\gp$
to the group of automorphisms of the semisimple
part $\gp$ of $\g\ts$.
Denote by $R$ the inverse image in $\widehat{\Gr}$ 
of the group of diagram automorphisms of $\gp\ts$. We have
$\widehat{\Gr}=R\Gr\ts$. Just as above, the action of
$R$ extends to $\Mb$ and $\Zb$ so that
the image of $M^{\ts\widehat{\Gr}}=(M^{\ts\Gr})^{R}$ 
under the map $\ga$ equals $Q^{\ts R}\ts$. 
The intersection $R\cap\Gr$ coincides with the center
of $\Gr$ and acts trivially on $Z^{\ts0}$ and on $Q\ts$.  
Thus we have an action of the finite group $S=R\ts/(R\cap\Gr)$
on $Z^{\ts0}$ and on $Q\ts$,
so that $Q^{\ts R}=Q^{\ts S}\ts$.
Hence the image of $M^{\ts\widehat{\Gr}}$ under the projection $\ga$
equals $Q^{\ts S}\ts$.
\end{remark}


\section*{Acknowledgments}

We are grateful to
O.\,Ogievetsky, D.\,Panyushev and A.\,Rosly for useful discussions,
and to the anonymous referee for helpful remarks.
The first author was supported by the RFBR grant 08-01-00667,
joint grant 09-02-90493, and the grant for Support of Scientific Schools 
3036-2008-2.
The second author was supported by the EPSRC grant C511166.
The third author was supported by the RFBR grant 09-01-00648.



\end{document}